\documentclass[12pt,a4paper]{article}
\usepackage{hyperref}
\usepackage{simon}
\usepackage[thmmarks,amsmath]{ntheorem}

\theoremnumbering{arabic}
\theoremstyle{plain}
\RequirePackage{latexsym}

\theorembodyfont{\itshape}
\theoremheaderfont{\normalfont\bfseries}
\theoremseparator{}
\theoremsymbol{}
\newtheorem{theorem}{Theorem}
\newtheorem{proposition}{Proposition}
\newtheorem{lemma}{Lemma}

\theoremheaderfont{\normalfont\itshape}
%\theoremsymbol{\ensuremath{_\Box}}
\theorembodyfont{\upshape}
\newtheorem{example}{Example}
\newtheorem{definition}{Definition}

\theoremstyle{nonumberplain}
\theoremheaderfont{\scshape}
\theorembodyfont{\normalfont}
\theoremsymbol{\ensuremath{\Box}}
\newtheorem{proof}{Proof}

\RequirePackage{amssymb}
\qedsymbol{\ensuremath{\Box}}
\theoremclass{LaTeX}

\usepackage{cdot}

\DeclareMathOperator{\auc}{AUC}
\DeclareMathOperator{\tauc}{tAUC}

\usepackage{bbm}
\newcommand{\ind}{\mathbbm{1}}

\newcommand\X{\mathcal{X}}
\newcommand\Y{\mathcal{Y}}

\newcommand\Family{\mathcal{F}}
\newcommand\Predspace{\mathcal{Z}}

\newcommand\power{\wp}

%\usepackage[style=chicago-authordate,maxcitenames=3,firstinits=true]{biblatex}
% \addbibresource{refs.bib}
% \DeclareFieldFormat{titlecase}{#1}
% \usepackage{biblatex-mr}
\usepackage{natbib}

  \title{\bf Empirical AUC for evaluating probabilistic forecasts}
  \author{Simon Byrne%
  \hspace{.2cm}\\
  Department of Statistical Science\\
  University College London\\
  United Kingdom\\
  \texttt{simon.byrne@ucl.ac.uk}}

\begin{document}
\maketitle

\begin{abstract}
  Scoring functions are used to evaluate and compare partially probabilistic
  forecasts. We investigate the use of rank-sum functions such as empirical
  Area Under the Curve (AUC), a widely-used measure of classification
  performance, as a scoring function for the prediction of probabilities of a
  set of binary outcomes. It is shown that the AUC is not generally a proper
  scoring function, that is, under certain circumstances it is possible to
  improve on the expected AUC by modifying the quoted probabilities from their
  true values. However with some restrictions, or with certain modifications,
  it can be made proper.
\end{abstract}
\noindent%
{\it Keywords:}  scoring rules, scoring functions, area under the curve.

\section{Introduction}
\label{sec:introduction}

Predicting the outcomes of multiple binary variables is a common problem
across a variety of application domains, such as fraud detection, credit risk
evaluation, medical diagnostics and weather forecasting. Such forecasts
typically carry some information describing the uncertainty of the forecaster,
such as assigning explicit probabilities or some other numerical value to each
variable that allows the variables to be ranked in order of relative
probability of occurrence.

This paper investigates numerical measures for evaluating and comparing the
accuracy of such forecasts.  Although such measures have always been important
for comparing algorithms, their role has become increasingly important with
the popularity of prediction competitions, where it is necessary to precisely
quantify the performance of participants. In particular, we use the framework
of \emph{scoring functions}, which maps the prediction and subsequent
observation to a single real number, the \emph{score}, representing the reward
to the forecaster. The aim of the forecaster is then to maximise this reward.

Scoring functions can be viewed as extensions of \emph{scoring rules}
(section~\ref{sec:scoring-rules}), which require that the forecast be fully
probabilistic, providing a full joint probability distribution over the set of
all possible outcomes, which can be infeasible and unnecessary in many
situations. Scoring functions (section~\ref{sec:scoring-functions}) on the
other hand can make use of partial probabilistic information such as marginal
distributions, or rankings of expected values. One desirable feature of both
scoring rules and scoring functions is that they be \emph{proper}: that the
forecaster always has the incentive to be honest, in that the forecast which
maximises their expected score matches their true belief.
% Well studied etc.

The focus of this paper is on a class of scoring functions termed
\emph{rank-sum functions} (section~\ref{sec:rank-sum}), the most well-known of
which is the \emph{area under the curve} (AUC), the curve in question being
the receiver operating characteristic (ROC). The ROC and AUC describe the
usefulness of the forecast in terms of its ability to discriminate between
positive and negative outcomes. Note that this paper specifically focuses on
the empirical AUC, and not the theoretical quantity that is perhaps more often
studied: this distinction is explained in detail in
section~\ref{sec:theoretical-auc}.

The main results (section~\ref{sec:proper-rank-sum}) identify sufficient
conditions for rank-sum scoring functions to be proper for evaluating the
accuracy of forecasts of the marginal probabilities of a sequence of binary
forecasts. In general, the AUC is not of this class, and a counter-example is
provided which demonstrates a case in which the AUC is \emph{not} a proper
scoring function, in that there exist distributions under which the forecaster
might improve their expected score by quoting probabilities different than
their true belief.

This framework can be further extended to the case where instead of making a
direct prediction, the forecaster is required to provide a mapping that
indirectly makes predictions from an as-yet unobserved covariate
(section~\ref{sec:map-scoring}). In section~\ref{sec:discussion}, we discuss
some open questions, and problems with extending the framework to a sequential
setting.

\section{Scoring of forecasts for binary outcomes}
\label{sec:scoring}

\subsection{Scoring rules}
\label{sec:scoring-rules}

% scoring rules
% scoring function

Consider the setting where one is eliciting forecasts about some future
outcome $Y$ that takes values in an \emph{outcome space} $\Y$. A
\emph{probabilistic forecast} is a distribution $Q$ for $Y$ that
describes the forecasters uncertainty of $Y$. We define $\Family$ to be a
family of distributions over $\Y$ that are under consideration.

After the actual outcome $Y=y$ is observed, the reward to the forecaster is
determined by a \emph{scoring rule}, a function $S : \Y \times \Family \to
\R$, that maps the quoted $Q$ and observed outcome $y$ to a real number
$S(y,Q)$ termed the \emph{score}. We take scoring rules to be \emph{positively
  oriented}, that is the score represents the reward to the forecaster, who
therefore aims to maximise this quantity. In a decision theoretic context, the
negation of the score can be considered a \emph{loss
  function}. Mathematically, the problem can be precisely phrased in the form
of a game between a Forecaster and Nature \citep{dawid2012}.

For any $P \in \Family$, we can then define the \emph{expected score} as the
$\E_P[S(Y,Q)]$, where $Y$ is generated from $P$.  A scoring rule $S$ is
\emph{proper} if an optimal strategy for the forecaster is to quote a
distribution that matches their actual uncertainty, that is, if for all $Q,P
\in \Family$,
\begin{equation}
  \label{eq:proper-scoring-rule}
  \E_P[S(Y,Q)] \leq \E_P[S(Y,P)].
\end{equation}
Additionally, $S$ is termed \emph{strictly proper} if this is the only optimal
strategy, \ie \eqref{eq:proper-scoring-rule} is an equality only if
$Q=P$. Proper scoring rules for discrete variables have been extensively
studied \citep[\eg][]{dawid2012}; common examples include the Brier, spherical
and the log scores.

In this paper, we will consider the outcome space to be a vector of binary
variables,
\begin{equation*}
  Y = (Y_1,\ldots, Y_n) \in \Y = \{0,1\}^n .
\end{equation*}
In this case, the distribution $Q$ takes values on $\Delta_{2^n-1}$, the
$(2^n-1)$-dimensional unit simplex. If the family $\Family$ is the set of all such
distributions, then for large values of $n$ this can place a large burden in
terms of time and resources in constructing, communicating and evaluating the
score of the forecast. This motivates a more flexible framework.

% Applications for such scoring rules are widespread, for example credit scoring
% algorithms, and ranking of competitors in 

\subsection{Scoring functions}
\label{sec:scoring-functions}

Suppose that instead of supplying a distribution $Q$ from a family $\Family$, we
require forecaster to quote a forecast from an arbitrary set $\Predspace$,
which we will term the \emph{prediction space}. Then a \emph{scoring function}
is a mapping of the form $s : \Y \times \Predspace \to \R$. \citet{gneiting2011}
extensively studied scoring functions in the context of point forecasts, where
$\Predspace = \Y$, though as we shall demonstrate, the concept extends directly to a more
general context.

The price of this generality is that we now need to explicitly specify the
aspects of the forecasters uncertainty that we want to capture. This can
be described by a \emph{(statistical) functional}, a possibly set-valued
function, $T : \Family \to \Predspace$ or $T : \Family \to \power \Predspace$,
where $\power \Predspace$ denotes the power set of $\Predspace$.

A scoring function $s$ is then said to be $T$-\emph{proper}
(\citet{gneiting2011} uses the term \emph{consistent})  if
for all $P \in \Family$, and all $u \in \Predspace$,
\begin{equation}
  \label{eq:exp-scorefn}
  \E_P[s(Y, u)] \leq  \E_P [s(Y, T(P))]
\end{equation}
for $\Predspace$-valued functional $T$, or for a set-valued functional $T$,
\begin{equation}
  \label{eq:exp-scorefn-set}
  \E_P[s(Y, u)] \leq  \E_P [s(Y, t)] \quad \text{for all $t \in T(P)$}.
\end{equation}
Furthermore, we can define $s$ to be \emph{strictly $T$-proper} if equality
holds only if $u = T(P)$ or $u \in T(P)$, respectively. Note that the
condition in \eqref{eq:exp-scorefn-set} implies that for any proper scoring
function $s$ of a set-valued functional, the expected score $\E_P [s(Y,t)]$
must be constant for all $t \in T(P)$. As would be expected from the
terminology, there is a strong link between scoring functions and scoring
rules, in that a (strictly) proper scoring function defines a (strictly)
proper scoring rule \citep[Theorem 3]{gneiting2011}.

In this paper, we focus on two specific classes of functionals for
distributions on $\Y = \{0,1\}^n$.

\subsubsection{Marginal scoring}
\label{sec:marginal-scoring}

\begin{definition}
  The \emph{marginal functional} $M$ maps a joint distribution to the
  marginal probabilities of each element of $Y$,
  \begin{equation*}
    M(P) = \E_P[Y] = \bigl( P[Y_1 = 1], \ldots, P[Y_n = 1] \bigr).
  \end{equation*}
  This functional reduces the $(2^n-1)$-dimensional distribution space to the
  $n$-dimensional prediction space $\Predspace = [0,1]^n$.
\end{definition}

We can easily construct scoring functions for the marginal functional as
functions of scoring rules for the individual elements of $Y$.
\begin{theorem}
  Let $S_i : \{0,1\} \times [0,1] \to \R$ be a scoring rule for a single
  binary outcome, such as the logarithmic, quadratic or Brier score. Then the
  scoring function
  \begin{equation*}
    s(y,m) = \sum_{i=1}^n S_i(y_i,m_i)
  \end{equation*}
  is (strictly) $M$-proper if each of the $S_i$ are (strictly) proper.
\end{theorem}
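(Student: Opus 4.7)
The plan is to reduce the multivariate $M$-properness inequality to the univariate properness of each $S_i$ via linearity of expectation. For any $P \in \Family$ and $m \in [0,1]^n$, I would first write
\[
\E_P[s(Y,m)] = \sum_{i=1}^n \E_P[S_i(Y_i, m_i)],
\]
and then observe that each summand depends on $P$ only through the marginal probability $p_i := P[Y_i=1]$, because $S_i$ is a function of $(Y_i,m_i)$ alone. Thus $\E_P[S_i(Y_i,m_i)]$ equals the expected score of $S_i$ under the Bernoulli distribution with parameter $p_i$.

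Next, I would apply the properness hypothesis on $S_i$ to this Bernoulli distribution, taking the forecaster's quote to be $m_i$ and the true parameter to be $p_i$, obtaining
\[
\E_P[S_i(Y_i, m_i)] \;\le\; \E_P[S_i(Y_i, p_i)]
\]
for every $i$. Summing these $n$ inequalities and recognising $(p_1,\ldots,p_n) = M(P)$ gives $\E_P[s(Y,m)] \le \E_P[s(Y,M(P))]$, which is precisely the $M$-properness condition in \eqref{eq:exp-scorefn}.

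For the strict case, I would argue that if equality holds in the summed inequality, then, since each summand already satisfies the $\le$ relation, equality must hold in every summand separately. Strict properness of each $S_i$ then forces $m_i = p_i$ for all $i$, i.e.\ $m = M(P)$.

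There is no real obstacle here: the only point requiring care is the remark that $\E_P[S_i(Y_i,m_i)]$ depends on $P$ only through its $i$-th marginal, which is immediate from the fact that $S_i$ ignores all coordinates other than $Y_i$. Everything else is linearity of expectation plus a componentwise application of the hypothesis.
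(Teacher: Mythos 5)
Your proposal is correct and is essentially the same argument as the paper's one-line proof (maximise each summand $\E_P[S_i(Y_i,m_i)]$ independently by taking $m_i = \E_P[Y_i]$), just written out in full, including the straightforward strictness step.
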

\begin{proof}
  Each $S_i$ can be maximised independently by choosing $m_i = \E[Y_i]$.
\end{proof}

\subsubsection{Rank scoring}
\label{sec:rank-scoring}

Recall that a \emph{total preorder} is a transitive and reflexive relation
$\precsim$ such that for any pair $i,j$, at least one of $i \precsim j$ or $j
\precsim i$. Given such a $\precsim$, we can define $i \sim j$ as the
symmetric relation $i \precsim j$ and $i \succsim j$ and $i \prec j $ as the
asymmetric relation $i \not\succsim j$ (which due to totality, implies $i \precsim
j$). Note $\precsim$ also implies a total ordering of the equivalence
classes under $\sim$.

Define $\Xi_n$ to be the set of total preorders on the set of indices $I =
\{1,\ldots,n\}$, then any vector $v \in \R^n$ \emph{induces} an element of $\precsim_v
\in \Xi_n$ by
\begin{equation*}
  i \precsim j \quad \Leftrightarrow \quad v_i \leq v_j.
\end{equation*}

\begin{definition}
  The \emph{exact rank functional} $R:\Family \to \Xi_n$ maps a joint distribution to the
  total preorder induced by the marginal functional $M$.
\end{definition}

The exact rank functional can also be characterised in terms of pairwise
comparisons.
\begin{proposition}
  Let $\precsim = R(P)$ for some distribution $P$ on $\Y$. Then
  \begin{equation*}
    i \precsim j 
    \quad \Leftrightarrow \quad
    P[Y_i > Y_j] \leq P[Y_i < Y_j].
  \end{equation*}
\end{proposition}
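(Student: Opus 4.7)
The plan is to unpack the definitions and then exploit the fact that $Y_i, Y_j$ take only the values $0$ and $1$ to rewrite each marginal probability in terms of the joint distribution of $(Y_i, Y_j)$.

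First, by the definition of the exact rank functional, $i \precsim j$ is equivalent to $M(P)_i \leq M(P)_j$, i.e.\ to $P[Y_i = 1] \leq P[Y_j = 1]$. So the task reduces to showing
\begin{equation*}
  P[Y_i = 1] \leq P[Y_j = 1] \quad \Leftrightarrow \quad P[Y_i > Y_j] \leq P[Y_i < Y_j].
\end{equation*}

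Next, since $Y_i, Y_j \in \{0,1\}$, the event $\{Y_i > Y_j\}$ coincides with $\{Y_i = 1, Y_j = 0\}$, and likewise $\{Y_i < Y_j\} = \{Y_i = 0, Y_j = 1\}$. I would then decompose each marginal according to the value of the other coordinate,
\begin{equation*}
  P[Y_i = 1] = P[Y_i = 1, Y_j = 0] + P[Y_i = 1, Y_j = 1],
\end{equation*}
and symmetrically for $P[Y_j = 1]$. Subtracting one from the other, the common term $P[Y_i = 1, Y_j = 1]$ cancels and I obtain
\begin{equation*}
  P[Y_j = 1] - P[Y_i = 1] = P[Y_i < Y_j] - P[Y_i > Y_j].
\end{equation*}
This identity makes the claimed equivalence immediate, in both directions and with equality preserved.

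There is no real obstacle here beyond bookkeeping: the proposition is essentially the observation that for binary variables, comparing marginals is the same as comparing the two off-diagonal cells of the joint distribution. I would write the argument as a short chain of equivalences culminating in the displayed identity above.
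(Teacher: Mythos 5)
Your argument is correct and is essentially the paper's own proof: both identify $\{Y_i > Y_j\}$ and $\{Y_i < Y_j\}$ with the off-diagonal cells and then cancel (equivalently, add) the common term $P[Y_i=1, Y_j=1]$ to compare the marginals. No gaps.
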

\begin{proof}
  By adding $P[Y_i=1,Y_j=1]$ to both sides, we have that
  \begin{equation*}
    P[Y_i=1, Y_j=0] \leq P[Y_i=0, Y_j=1] 
    \quad \Leftrightarrow \quad
    P[Y_i=1] \leq P[Y_j=1]
  \end{equation*}
\end{proof}
In the case where all the elements of $M(P)$ are unique, $R(P)$ is a total
order. We define $\Omega_n \subseteq \Xi_n$ to be the set of all total orders
on $I$. 
%, noting that there is a bijection between $\Omega_n$ and $S_n$, the
%symmetric group of order $n$.

Note that the exact rank functional requires that ties ($\E[Y_i] = \E[Y_j]$)
be identified exactly. We define a weaker notion under which the ties can be
ignored. A relation $\precsim'$ is \emph{contained} in a relation $\precsim$ if $\precsim'
\subseteq \precsim$, that is, if $i \precsim' j$ implies that $i \precsim j$.
\begin{definition}
  The \emph{weak rank functional} $R^*:\Family \to \power\Xi_n$ is the
  set-valued functional that maps a probability distribution to the set of
  total preorders contained in the exact rank functional:
  \begin{equation*}
    R^*(P) = \{\precsim \in \Xi_n \,:\, \precsim \subseteq R(P)\}.
  \end{equation*}
\end{definition}
% explain rationale
As a result, if all elements of $M(P)$ are unique, then $R^*(P) = \{R(P)\}$,
and conversely if all the elements of $M(P)$ are equal, then $R^*(P) = \Xi_n$.

Given an $R^*$-proper scoring function $s$, we can construct a
$M$-proper scoring function $s'$, via $s'(y, m) = s(y, \precsim_m)$.
Of course, such a scoring function can never be strictly $M$-proper, as
$\precsim_m$ is preserved under any monotonic increasing transformation.

An advantage of rank-based scoring functions is that they allow the
use of more abstract measures of propensity other than probability, and make
it possible to compare forecasts generated by a wide variety of algorithms,
whose outputs need not necessarily have a direct probabilistic
interpretation. The downside is that we lose the ability to say anything about
the \emph{calibration} of the forecaster.

%An example of such a problem is IJCNN Social Network Challenge (\url{http://kaggle.com/socialNetwork}). Competitors are required to predict whether or not there are edges between certain pairs of nodes in a large graph representing a social network, given the structure of the rest of the network.

\section{Rank-sum scoring functions}
\label{sec:rank-sum}

We now consider a particular class of rank-based scoring functions. For any
total preorder $\precsim$, we define its \emph{rank vector} $\rho:\Xi_n \to
\R^n$ to be the net number of elements that precede each element,
\begin{equation*}
  \rho_i(\precsim) = \sum_{j=1}^n \ind_{j \precsim i} - \ind_{j \succsim i} 
\end{equation*}
We will consider the class \emph{rank-sum} scoring functions, of the form
\begin{equation}
  \label{eq:ranksum}
  s(y,\precsim) = g(y) + \sum_{i=1}^n \sigma_i(y) \rho_i(\precsim).
\end{equation}
for some functions $g$ and $\sigma = (\sigma_i)_{i=1,\ldots,n}$

\begin{example}[Wilcoxon--Mann--Whitney $u$]
  The most well-known example of such a function is the
  \emph{Wilcoxon--Mann--Whitney $u$}, commonly used as a nonparametric test
  statistic for comparing magnitude of two random variables. It is defined as
  the number of times observations where $y_i = 0$ precede observations where
  $y_i = 1$, with ties counting as half
  \begin{equation}
    \label{eq:wmw-indh}
    u(y,\precsim) = \sum_{i:y_i =0} \sum_{j:y_j =1} \ind_{i \prec j} +
    \tfrac{1}{2} \ind_{i \sim j} .
  \end{equation}
  The term inside the summation is equal to $\tfrac{1}{2}[1 + \ind_{i\precsim j}
  - \ind_{i \succsim j}]$, and so
  \begin{equation*}
    u(y,\precsim) = \tfrac{1}{2} n_0(y) n_1(y) + \tfrac{1}{2} \sum_{i,j=1}^n
    y_i (1 - y_j) (\ind_{i\precsim j}  - \ind_{i \succsim j}).
  \end{equation*}
  where $n_1(y) = \sum_{i=1}^n y_i$, and $n_0(y) = n-n_1(y)$. By symmetry, we
  have that $\sum_{i,j}  (\ind_{i\precsim j}  - \ind_{i \succsim j}) =0$, and hence,
  \begin{equation*}
    u(y,\precsim) = \tfrac{1}{2} n_0(y) n_1(y) + \tfrac{1}{2}\sum_{i=1}^n y_i \rho_i(\precsim).
  \end{equation*}
  For a fixed $y$, $u$ will take values on the half-integers
  $0,\tfrac{1}{2},1,\ldots,n_0(y) n_1(y)$.
\end{example}

\begin{example}[Area under the curve]
  \label{exm:auc}
  The \emph{receiver operating characteristic} (ROC) describes the trade-off
  of sensitivity and specificity (or type I and type II error) of a preorder, and is
  calculated by plotting the true positive rate against the false positive
  rate that would be obtained by taking different elements of the preorder as
  the cutoff.

  It can be described as the parametric curve on $[0,1] \times [0,1]$,
  starting at $(1,1)$, then linearly connecting the points
  \begin{equation}
    \label{eq:roc}
    \left( 
      \sum_{j: y_j = 0} \frac{\ind_{j \succ i}}{n_0(y)},  
      \sum_{j: y_j = 1} \frac{\ind_{j \succ i}}{n_1(y)}
    \right),
  \end{equation}
  for each equivalence class $i$ under $\sim$, in the order of $\prec$.
  
  The \emph{area under the curve} (AUC) is then the total area under this
  curve, which will take values on $[0,1]$. It is
  well-established \citep[\eg][]{hanley1982} that this is in fact equal to
  the Wilcoxon--Mann--Whitney $u$, standardised by dividing by $n_0(y)n_1(y)$.

  Note that if the outcomes are identical (\ie $y =\mathbf{0}$ or
  $\mathbf{1}$), then the ROC and AUC are not properly defined. For
  convenience, we can define the AUC to be $1/2$ in both these cases, however
  the choice of this constant does not affect any of the results other than
  Theorem~\ref{thm:exp-auc}.
  
  As a result, we can write
  \begin{equation*}
    \auc(y,\precsim) = \tfrac{1}{2} + \tfrac{1}{2} \sum_{i=1}^n \alpha_i(y)
    \rho_i(\precsim)
    \quad \text{where}\ 
    \alpha_i(y) =
    \begin{cases}
      \displaystyle
      \frac{y_i}{n_0(y) n_1(y)} & \quad n_1(y) \ne 0, n, \\
      0 & \quad \text{otherwise}.
    \end{cases}
  \end{equation*}
  Also related is the \emph{Gini coefficient}, $g(y, \precsim) =
  2\auc(y,\precsim) - 1$, which is twice the net area of the
  ROC above the diagonal, and takes values on $[-1,1]$.
\end{example}

\subsection{Relation to theoretical AUC}
\label{sec:theoretical-auc}

Although the AUC has been widely explored in the literature, much of this
work \citep[\eg][]{agarwal2005,clemencon2008,hand2009,flach2011} focuses on a
related but distinct quantity, which we will term the theoretical AUC.

Let $\theta$ be a joint distribution for a random pair $(X_i,Y_i)$, where
$X_i$, taking values in some set $\mathcal{X}_{\Cdot}$, is termed the
\emph{covariate} or \emph{feature}, and $Y_i$ is a single binary response. For
some mapping $f: \mathcal{X}_{\Cdot} \to \R$, we define the conditional CDFs
$F_y(z) = \theta[ f(X_i) < z \mid Y_i = y ]$. Then the \emph{theoretical
  ROC} replaces the empirical quantities of \eqref{eq:roc} with their
theoretical equivalents,
\begin{equation*}
  \bigl(1 - F_0(z), 1-F_1(z) \bigr), \quad z \in \R
\end{equation*}
which again, describes a curve over $[0,1] \times [0,1]$. Similarly, the
\emph{theoretical AUC}, denoted $\tauc(\theta, f)$, is the area under this curve.

The theoretical AUC can be rewritten as the conditional
expectation \citep[\eg][Proposition B.2]{clemencon2008},
\begin{equation}
  \label{eq:tauc-expect}
  \tauc(\theta, f) = \E \left[ \ind_{f(X_1) > f(X_2)} + \tfrac{1}{2} \ind_{f(X_1) =
      f(X_2)} \mid  Y_1 = 1, Y_2  = 0\right],
\end{equation}
where the expectation is with respect to the product measure of
$\theta\times\theta$ for $[(X_1,Y_1) , (X_2,Y_2)]$.

The relationship between the empirical and theoretical AUCs is
well-established, though for completeness we clarify the usual
presentation \citep[\eg][Lemma 2]{agarwal2005}.
\begin{theorem}
  \label{thm:exp-auc}
  Let the pairs $(X_1,Y_1), \ldots, (X_n,Y_n)$ be independent and
  identically distributed as $\theta$, then the expected empirical AUC,
  \begin{equation*}
    \E[\auc(Y,\precsim_{f(X)})] = (1-\pi_0^n -\pi_1^n) \tauc(\theta,f) + \tfrac{1}{2} (\pi_0^n +\pi_1^n)
  \end{equation*}
  where $\pi_c = \theta(Y_i = c)$.
\end{theorem}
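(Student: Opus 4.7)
The plan is to condition on the value of $n_1(Y) = \sum_i Y_i$ and show that, conditional on $1 \leq n_1(Y) \leq n-1$, the conditional expected empirical AUC is exactly $\tauc(\theta,f)$, while the conventional value $\tfrac12$ is supplied on the two degenerate events.

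First I would dispose of the boundary cases. By independence, $P[n_1(Y) = 0] = \pi_0^n$ and $P[n_1(Y) = n] = \pi_1^n$, and on either event $\auc$ is defined to be $\tfrac12$. So the contribution of these two events to the expected AUC is exactly $\tfrac12(\pi_0^n + \pi_1^n)$, and it remains to show that the expectation restricted to $\{1 \leq n_1(Y) \leq n-1\}$ equals $(1 - \pi_0^n - \pi_1^n)\,\tauc(\theta,f)$.

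Next I would use the Wilcoxon--Mann--Whitney representation of \eqref{eq:wmw-indh}: on this event,
\begin{equation*}
  \auc(Y,\precsim_{f(X)}) = \frac{1}{n_0(Y)\,n_1(Y)} \sum_{i\neq j} \ind_{Y_i=0}\ind_{Y_j=1}\bigl[\ind_{f(X_i)<f(X_j)} + \tfrac12 \ind_{f(X_i)=f(X_j)}\bigr].
\end{equation*}
Condition further on $n_1(Y)=k$ with $1 \leq k \leq n-1$. By exchangeability of the pairs $(X_i,Y_i)$, for any $i\neq j$ we have $P[Y_i=0,Y_j=1 \mid n_1(Y)=k] = k(n-k)/[n(n-1)]$, and given $\{Y_i=0,Y_j=1,n_1(Y)=k\}$ the pair $(X_i,X_j)$ depends only on the conditioning values $Y_i=0,Y_j=1$, so by the definition \eqref{eq:tauc-expect} of $\tauc$ the bracketed term has conditional expectation $\tauc(\theta,f)$. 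Summing over the $n(n-1)$ ordered pairs and dividing by $k(n-k)$ gives $\E[\auc(Y,\precsim_{f(X)}) \mid n_1(Y)=k] = \tauc(\theta,f)$, independent of $k$.

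Combining these, the restricted expectation equals $\tauc(\theta,f)\,P[1 \leq n_1(Y) \leq n-1] = (1-\pi_0^n-\pi_1^n)\tauc(\theta,f)$, and adding the boundary contribution gives the stated identity. No step is really hard; the only point requiring some care is the conditioning argument — specifically, that the denominator $n_0(Y) n_1(Y) = k(n-k)$ is $\sigma(n_1(Y))$-measurable and so can be pulled outside the inner expectation, which is what allows the $k(n-k)$ factor in the numerator to cancel cleanly against it.
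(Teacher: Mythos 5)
Your proof is correct and follows essentially the same route as the paper: treat the degenerate events $Y=\mathbf{0},\mathbf{1}$ (total probability $\pi_0^n+\pi_1^n$, each contributing the conventional value $\tfrac12$) separately, and show that conditionally on the complement the expected empirical AUC equals $\tauc(\theta,f)$ by reducing each discordant pair to the defining expectation \eqref{eq:tauc-expect}. The only cosmetic difference is that the paper conditions on the full label vector $Y=y$, which makes the denominator $n_0(y)n_1(y)$ constant and the pairwise reduction immediate, whereas you condition only on $n_1(Y)=k$ and recover the same conclusion via the exchangeability computation $P[Y_i=0,\,Y_j=1\mid n_1(Y)=k]=k(n-k)/[n(n-1)]$.
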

\begin{proof}
  For any vector $y \ne \mathbf{0}, \mathbf{1}$, the expectation of
  \eqref{eq:wmw-indh} conditional on $Y=y$ gives an expression of the form of
  \eqref{eq:tauc-expect}, and hence $\E[\auc(Y ,\precsim_{f(X)}) \mid Y=y]
  =\tauc(\theta, f)$.
\end{proof}

We emphasise several key differences between the empirical and theoretical
AUC. Firstly, the theoretical AUC is a function of the mapping $f$ from $X_i$
that is used to induce a ranking on $Y_i$ (confusingly, this is itself
referred to as a ``scoring function'' in the literature).

Another distinction is that the distribution $\theta$ is now a
hypothetical sampling model for a single pair $(X_i,Y_i)$, whereas the
previous distribution $P$ describes the forecasters uncertainty for a set
$(Y_1,\ldots,Y_n)$. We emphasise that these are distinct concepts:
whereas the i.i.d. assumption is typically reasonable in a sampling context,
it is extremely unrealistic for describing uncertainty, in that it would imply
that there is absolutely no information to be gained about $Y_n$ from the
other $Y_1,\ldots,Y_{n-1}$.

Additionally, although the negation of $\tauc(\theta,f)$ can still be interpreted as a loss
function in the standard decision-theoretic sense (\eg for deriving minimax
procedures), $\tauc(\theta,f)$ cannot be used as a scoring function as
$\theta$ is typically never observed directly.

% Consequently, as $n \to \infty$, we have that $\E[\auc(Y,\precsim_{f(X)})] \to
% \tauc(\theta,f)$ (assuming, of course, $0 < \pi_0,\pi_1 < 1$).

\subsection{Proper rank-sum scoring functions}
\label{sec:proper-rank-sum}

To determine the propriety of such scoring functions, we utilise the following
key lemma.
\begin{lemma}
  \label{lem:sumvec}
  For any fixed vector $v \in \R^n$, the quantity
  \begin{equation}
    \label{eq:sumvec}
    \sum_{i=1}^n v_i \rho_i(\precsim)
  \end{equation}
  is maximised over $\precsim \in \Xi_n$ if and only if $\precsim$ is contained
  in $\precsim^{(v)}$, the preorder induced by $v$.
\end{lemma}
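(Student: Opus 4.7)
The plan is to rewrite the objective (\ref{eq:sumvec}) as a pairwise sum and then maximize one pair at a time. Expanding the definition of $\rho_i$,
\begin{equation*}
\sum_{i=1}^n v_i \rho_i(\precsim)
= \sum_{i,j} v_i \bigl(\ind_{j \precsim i} - \ind_{j \succsim i}\bigr)
= \sum_{i,j} v_i \bigl(\ind_{j \prec i} - \ind_{j \succ i}\bigr),
\end{equation*}
since the diagonal $j=i$ and the tied pairs $j \sim i$ each contribute $1-1=0$ to the difference. Relabeling $i \leftrightarrow j$ in the second half and combining yields the compact form
\begin{equation*}
\sum_{i=1}^n v_i \rho_i(\precsim) = \sum_{(i,j)\,:\, j \prec i} (v_i - v_j),
\end{equation*}
so every strictly ordered pair under $\precsim$ contributes the corresponding $v$-gap with the sign supplied by $\precsim$, and ties contribute nothing.

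Next I would maximize pair by pair. For a pair with $v_i = v_j$ the contribution is zero regardless of how $\precsim$ compares $i$ and $j$, so no constraint is imposed. For a pair with $v_i \neq v_j$, say $v_i > v_j$, the three options available to $\precsim$ contribute $+(v_i - v_j)$, $0$, or $-(v_i - v_j)$ according to whether $\precsim$ strictly agrees with, ties, or strictly reverses the $v$-ordering, and only the first option achieves the maximum value $v_i - v_j$. Summing over pairs, the global maximum is attained if and only if $\precsim$ strictly agrees with $v$ on every pair with $v_i \neq v_j$.

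Finally I would match this ``strict agreement'' condition to $\precsim \subseteq \precsim^{(v)}$. If $\precsim \subseteq \precsim^{(v)}$ and $v_i > v_j$, then $i \precsim j$ would force $v_i \leq v_j$, a contradiction; by totality we then conclude $j \prec i$, giving strict agreement. Conversely, if strict agreement holds and $i \precsim j$, then $v_i > v_j$ would force $j \prec i$, contradicting $i \precsim j$, and so $v_i \leq v_j$, establishing containment. The main subtlety is ties: a tie in $\precsim$ between elements with distinct $v$-values is simultaneously strictly suboptimal (it forfeits a positive contribution $|v_i - v_j|$) and incompatible with containment (it would require both $v_i \leq v_j$ and $v_j \leq v_i$), which is what makes the ``iff'' align cleanly on the tie side as well as on the reversal side.
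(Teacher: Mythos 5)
Your proof is correct, but it takes a genuinely different route from the paper. The paper first restricts to total orders, where the claim follows from the rearrangement inequality, and then handles a general preorder $\precsim$ by observing that $\rho(\precsim)$ is the average of the rank vectors $\rho(\precsim')$ over all total orders $\precsim'$ contained in $\precsim$; the maximum of this average is attained iff every such $\precsim'$ is contained in $\precsim^{(v)}$, which is equivalent to $\precsim \subseteq \precsim^{(v)}$. You instead rewrite the objective directly as the pairwise sum $\sum_{(i,j)\,:\,j \prec i}(v_i - v_j)$ and maximise each unordered pair separately, each pair contributing at most $|v_i - v_j|$, with equality exactly when $\precsim$ strictly agrees with $v$ on pairs with distinct values (ties in $v$ being unconstrained); you then verify that this strict-agreement condition is equivalent to containment in $\precsim^{(v)}$. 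Your argument is more elementary and self-contained: it avoids both the rearrangement inequality and the averaging-over-linear-extensions step, and it makes the treatment of ties and the ``only if'' direction completely explicit. The paper's argument, by contrast, is shorter once those two facts are granted and exposes the structural fact that a preorder's rank vector is the barycentre of the rank vectors of its linear extensions, which is of some independent interest. One small point worth making explicit in your write-up: the pairwise upper bound $\sum_{\{i,j\}}|v_i - v_j|$ is simultaneously attainable by a single element of $\Xi_n$ (namely $\precsim^{(v)}$ itself, which strictly agrees with $v$ on every pair with $v_i \neq v_j$), so ``maximum over $\Xi_n$'' and ``pairwise maximum'' coincide; this is implicit in your final equivalence, since $\precsim^{(v)} \subseteq \precsim^{(v)}$, but stating it closes the loop cleanly.
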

\begin{proof}
  Firstly, note that if we were to consider only total orders $\precsim \in
  \Omega_n$, then the statement is a direct result of the rearrangement
  inequality.  For any total preorder $\precsim \in \Xi_n$, define
  $A(\precsim)$ to be the set of total orders contained in $\precsim$, that is
  $A(\precsim) = R^*(\precsim) \cap \Omega_n$. Then for any $i,j$, by symmetry
  we have that
  \begin{equation*}
    \ind_{i \precsim j} = \frac{1}{|A(\precsim)|} \sum_{\precsim' \in A(\precsim)} \ind_{i \precsim' j}.
  \end{equation*}
  Therefore $\rho(\precsim)$ is the average of all $\rho(\precsim')$ for
  $\precsim' \in A(\precsim)$. It follows then that \eqref{eq:sumvec} is is
  maximised if and only if all such $\precsim'$ are themselves contained
  $\precsim^{(v)}$, which in turn implies that $\precsim$ itself is contained in
  $\precsim^{(v)}$.
\end{proof}

This then leads to our main result.
\begin{theorem}
  A rank-sum scoring function $s$ of the form in \eqref{eq:ranksum} is
  strictly $R^*$-proper if and only if $\precsim_{Pf}$, the preorder induced
  by $\E_P[\sigma_i(Y)]$, is an element of $R^*(P)$ for all $P \in \Family$.
\end{theorem}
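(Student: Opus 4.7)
The plan is to express the expected score as an affine function of the rank vector $\rho(\precsim)$ and then invoke Lemma~\ref{lem:sumvec}. Since $\rho_i(\precsim)$ is deterministic once $\precsim$ is fixed and only $g(Y)$ and the $\sigma_i(Y)$ carry randomness, linearity of expectation yields
\begin{equation*}
  \E_P[s(Y,\precsim)]
  \;=\; \E_P[g(Y)] \;+\; \sum_{i=1}^n \E_P[\sigma_i(Y)]\, \rho_i(\precsim).
\end{equation*}
The first term is constant in $\precsim$; setting $v_i = \E_P[\sigma_i(Y)]$, maximising $\E_P[s(Y,\cdot)]$ over $\Xi_n$ is therefore equivalent to maximising $\sum_i v_i\, \rho_i(\precsim)$.

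Applying Lemma~\ref{lem:sumvec} with this vector $v$, the set of maximisers is exactly $\{\precsim \in \Xi_n : \precsim \subseteq \precsim^{(v)}\}$, and by the theorem's notation $\precsim^{(v)} = \precsim_{Pf}$. The remaining step is to match this maximiser set against $R^*(P) = \{\precsim : \precsim \subseteq R(P)\}$: strict $R^*$-propriety asks that a preorder attains the maximum expected score \emph{iff} it lies in $R^*(P)$, which unpacks to the equivalence $\precsim \subseteq \precsim_{Pf} \Leftrightarrow \precsim \subseteq R(P)$. Taking $\precsim = \precsim_{Pf}$ in the forward direction gives $\precsim_{Pf} \subseteq R(P)$, i.e., $\precsim_{Pf} \in R^*(P)$, and conversely this containment together with the automatic constancy of the expected score on $\{\precsim : \precsim \subseteq \precsim_{Pf}\}$ recovers the full equivalence of the two sets when combined with propriety.

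The analytical content is already carried by Lemma~\ref{lem:sumvec}; what remains is essentially bookkeeping with set containments between preorders. The step I would scrutinise most carefully is the set-valued interpretation of strict $R^*$-propriety — namely, verifying that the single membership condition $\precsim_{Pf} \in R^*(P)$ simultaneously captures that no $\precsim \notin R^*(P)$ is optimal (ruling out strictly better alternatives) and, via the quantifier over all $P \in \Family$, that every $t \in R^*(P)$ remains among the maximisers so that the two ``$R^*$''-described sets coincide exactly.
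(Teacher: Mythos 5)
Your proposal is correct and takes essentially the same route as the paper: the same linearity-of-expectation decomposition, the same application of Lemma~\ref{lem:sumvec} with $v_i = \E_P[\sigma_i(Y)]$, and the same bookkeeping identifying the maximisers of the expected score as the preorders contained in $\precsim_{Pf}$. The subtlety you rightly flag---verifying that every element of $R^*(P)$ is itself a maximiser, and not merely that all maximisers lie in $R^*(P)$---is treated no more explicitly in the paper, whose proof likewise disposes of the containment comparison in a single sentence.
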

\begin{proof}
  By the linearity of expectation, we have that
  \begin{equation*}
    \E_P[s(Y,\precsim)] = \E_P[g(Y)] + \sum_{i=1}^n \E_P[\sigma_i(Y)]
    \rho_i(\precsim).
  \end{equation*}
  By Lemma~\ref{lem:sumvec}, this can be maximised by any $\precsim$ contained
  in $\precsim_{Pf}$. These are all elements of $R^*(P)$ if and only if
  $\precsim_{Pf}$ itself is in $R^*(P)$.
\end{proof}

Consequently, the Wilcoxon--Mann--Whitney $u$ function is a strictly
$R^*$-proper scoring function, however the same cannot be said of the AUC.
\begin{example}
  \label{exm:auc-counter}
  Define the distribution $P$ on $(Y_1, Y_2, Y_3, Y_4)$ with the following
  non-zero probabilities:
  \begin{equation*}
    P(1,1,0,0) = \tfrac{1}{2}, \quad
    P(0,0,1,0) = \tfrac{7}{16}, \quad
    P(0,0,0,1) = \tfrac{1}{16}.
  \end{equation*}
  Then defining $\alpha$ as in Example~\ref{exm:auc}, we have that
  \begin{equation*}
    \E[Y] = \left( 
      \tfrac{1}{2}, \tfrac{1}{2}, \tfrac{7}{16}, \tfrac{1}{16}
    \right)
    \quad \text{and}\quad
    \E[\alpha(Y)] = \left( 
      \tfrac{1}{8}, \tfrac{1}{8}, \tfrac{7}{48}, \tfrac{1}{48} 
    \right).
  \end{equation*}
  Define $\precsim_P$ and $\precsim_{\alpha}$ as the preorders induced by
  $\E[Y]$ and $\E[\alpha(Y)]$, respectively. Then $\rho(\precsim_P) =
  (2,2,-1,-3)$ and $\rho(\precsim_{\alpha}) = (0,0,3,-3)$, with expected AUCs
  \begin{equation*}
    \E[\auc(Y,\precsim_P)] = \tfrac{31}{48} < \E[\auc(Y,\precsim_{\alpha})] = \tfrac{33}{48}.
  \end{equation*}
\end{example}
This rather contrived example is illustrative of how the problem arises,
namely the denominator of $\alpha$ can alter the relative importance of
certain outcomes. Nevertheless, there exist certain families $\Family$ under
which AUC is indeed proper.

\begin{theorem}
  \label{thm:known-sum}
  If the number of positive outcomes $n_1(Y)$ is almost surely constant for
  all $P \in \Family$, then AUC is a strictly $R^*$-proper scoring function.
\end{theorem}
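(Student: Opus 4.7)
The plan is to reduce to the main rank-sum propriety theorem (the one just proved before the statement), which says that the AUC, viewed as a rank-sum scoring function with coefficient vector $\sigma_i = \alpha_i$, is strictly $R^*$-proper for $\Family$ if and only if the preorder $\precsim_{P\alpha}$ induced by $\E_P[\alpha_i(Y)]$ lies in $R^*(P)$ for every $P \in \Family$. Since the counter-example (Example~\ref{exm:auc-counter}) shows that in general $\precsim_{P\alpha}$ can differ from $R(P)$ because of the data-dependent normaliser $n_0(y)n_1(y)$, the whole point of the constant-$n_1$ hypothesis is precisely to strip that denominator away. So my strategy is: show that under the hypothesis the denominator is a deterministic constant (or the definition of $\alpha$ makes it irrelevant), then check that $\precsim_{P\alpha}$ coincides with $R(P)$.

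First I would fix an arbitrary $P\in\Family$ and let $k$ be the (a.s.) constant value of $n_1(Y)$ under $P$. The easy boundary cases $k = 0$ and $k = n$ are handled at once: there $Y$ is a.s. constant (equal to $\mathbf{0}$ or $\mathbf{1}$), so every $\E_P[Y_i]$ is equal, $R(P)$ is the trivial total preorder in which all indices are $\sim$-equivalent, and therefore $R^*(P) = \Xi_n$. In particular any $\precsim_{P\alpha}$ whatsoever lies in $R^*(P)$, so the condition of the main theorem holds vacuously.

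The substantive case is $0 < k < n$. Then $n_0(y)n_1(y) = (n-k)k$ is a strictly positive constant for every $y$ in the support of $P$, so directly from the formula in Example~\ref{exm:auc},
\begin{equation*}
  \alpha_i(Y) = \frac{Y_i}{(n-k)k} \quad \text{almost surely under } P.
\end{equation*}
Taking expectations gives $\E_P[\alpha_i(Y)] = \E_P[Y_i]/((n-k)k)$, so the vector $\E_P[\alpha(Y)]$ is a positive scalar multiple of $M(P)$. Hence $\precsim_{P\alpha}$, the preorder it induces, coincides with the preorder induced by $M(P)$, which is precisely $R(P)$. Since $R(P) \in R^*(P)$ (every relation is contained in itself), the hypothesis of the main theorem is satisfied.

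Applying the main theorem to the AUC with $\sigma = \tfrac{1}{2}\alpha$ and $g(y) = \tfrac{1}{2}$ then gives strict $R^*$-propriety on $\Family$. The only mild obstacle is bookkeeping around the boundary convention $\mathrm{AUC} \equiv 1/2$ on $y = \mathbf{0}, \mathbf{1}$, but as noted in Example~\ref{exm:auc} this merely shifts the constant $g(y)$ and does not affect the rank-sum coefficients, so the reduction to Lemma~\ref{lem:sumvec} and the main theorem goes through unchanged.
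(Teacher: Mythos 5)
Your proof is correct and follows essentially the same route as the paper's: under the constant-$n_1$ hypothesis the normaliser $n_0(y)n_1(y)$ is deterministic, so $\E_P[\alpha_i(Y)]$ is a positive multiple of $\E_P[Y_i]$ and the main rank-sum theorem applies. Your explicit treatment of the degenerate cases $k=0,n$ (where the paper's one-line formula would divide by zero but $R^*(P)=\Xi_n$ makes the condition vacuous) is a welcome bit of extra care, not a deviation in method.
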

\begin{proof}
  If $n_1(Y) = r$ almost surely, then $\E_P[\alpha_i(Y)] = \E_P[Y_i] / \bigl((n-r)r\bigr)$.
\end{proof}
This justifies the use of AUC as a scoring function in cases where the
forecaster is informed of the number of positive outcomes beforehand. This
means that the forecaster is able to use this information to rule out extreme
tail events that might otherwise have provided a windfall score. For example,
in the IJCNN Social Network Challenge by
Kaggle (\url{https://www.kaggle.com/c/socialNetwork}) competitors
were required to estimate 8960 binary outcomes (corresponding to
presence/absence of an edge), of which they were informed that exactly half
were positive.

\begin{theorem}
  \label{thm:indep}
  If the $Y_i$'s are mutually independent under all $P \in \Family$, then AUC
  is a strictly $R^*$-proper scoring function.
\end{theorem}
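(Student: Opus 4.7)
The plan is to invoke the preceding main theorem: since AUC is a rank-sum function with $\sigma_i(y) = \tfrac{1}{2}\alpha_i(y)$, we need only verify that under independence the preorder $\precsim_{Pf}$ induced by $v_i := \E_P[\alpha_i(Y)]$ lies in $R^*(P)$. Writing $p_i := P[Y_i = 1]$, it suffices to show that for every pair $i \ne j$, the difference $v_i - v_j$ has the same sign as (or is zero together with) $p_i - p_j$; this will imply $\precsim_{Pf} = R(P)$, which certainly lies in $R^*(P)$.

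The first step is to restrict the sum defining $v_i - v_j$. Since $\alpha_i(y)$ vanishes whenever $y_i = 0$ (or $n_1(y) \in \{0,n\}$), the only outcomes contributing to $v_i - v_j$ are those where exactly one of $y_i, y_j$ equals $1$; the cases $y_i = y_j = 1$ and $y_i = y_j = 0$ cancel or contribute nothing.

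The second step, which is the heart of the argument, is a pairing: for each $y$ with $y_i = 1, y_j = 0$, let $y'$ denote the vector obtained by swapping coordinates $i$ and $j$. Then $n_0(y') = n_0(y)$ and $n_1(y') = n_1(y)$, so $\alpha_i(y) = \alpha_j(y') = 1/(n_0(y) n_1(y))$. Grouping terms by such pairs yields
\begin{equation*}
  v_i - v_j = \sum_{y:\, y_i=1,\, y_j=0} \frac{P(y) - P(y')}{n_0(y)\, n_1(y)}.
\end{equation*}
Independence now enters: letting $C_y = \prod_{k\ne i,j} P(Y_k = y_k) \geq 0$, we have $P(y) = p_i(1-p_j)\, C_y$ and $P(y') = (1-p_i)\, p_j\, C_y$, so $P(y) - P(y') = (p_i - p_j)\, C_y$. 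Factoring out gives
\begin{equation*}
  v_i - v_j = (p_i - p_j) \sum_{y:\, y_i=1,\, y_j=0} \frac{C_y}{n_0(y)\, n_1(y)},
\end{equation*}
and the remaining sum is non-negative.

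The final step is to check that this weighted sum is in fact strictly positive (so that the sign of $v_i - v_j$ genuinely matches that of $p_i - p_j$, giving strict propriety). This will be the only mildly delicate point: one must exhibit some $y$ with $y_i = 1, y_j = 0$ and $C_y > 0$, which can be done by choosing, for each $k \ne i,j$, $y_k = 1$ if $p_k > 0$ and $y_k = 0$ otherwise — no single $p_k$ can be simultaneously $0$ and $1$, so $C_y > 0$ for this choice. (If $n = 2$, the sum is empty but also vacuous since there are no $k \ne i,j$, and the single term $1/(n_0 n_1) = 1$ still gives strict positivity.) Combined with the main theorem, this yields strict $R^*$-propriety of AUC.
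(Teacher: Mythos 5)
Your proposal is correct and is essentially the paper's argument in a different notation: the paper writes $\alpha_i(Y)-\alpha_j(Y)=(Y_i-Y_j)\big/\bigl([1+n_0^{\neg(i,j)}(Y)][1+n_1^{\neg(i,j)}(Y)]\bigr)$ and factors the expectation using independence of $(Y_i,Y_j)$ from the remaining coordinates, which yields exactly your identity $v_i-v_j=(p_i-p_j)\sum_{y:\,y_i=1,\,y_j=0}C_y/(n_0(y)n_1(y))$, your swap-pairing being the atom-by-atom version of that factorization. Your strict-positivity step is also fine, though it is automatic since the weights $C_y$ sum to one and each term $1/(n_0(y)n_1(y))$ is positive.
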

\begin{proof} 
  Note that if $y_i \neq y_j$, then $n_1(y) = 1 + n_1^{\neg(i,j)} (y)$, where
  $n_1^{\neg(i,j)} (y) = \sum_{k \ne i,j} y_k$, and similarly for $n_0$. Then
  \begin{equation*}
    \alpha_i(y) - \alpha_j(y)
    = \frac{y_i - y_j}{n_0(y) n_1(y)} 
    = \frac{y_i - y_j}{[1+ n_0^{\neg(i,j)}(y)][1+n_1^{\neg(i,j)} (y)]},
  \end{equation*}
  since if $y_i = y_j$, the  numerator is zero. Then by mutual independence,
  \begin{equation*}
    \E[\alpha_i(Y)] - \E[\alpha_j(Y)] = \left(\E[Y_i] - \E[Y_j] \right) 
    \E\left[\frac{1}{[1+ n_0^{\neg(i,j)}(Y)][1+n_1^{\neg(i,j)} (Y)]}\right].
  \end{equation*}
  As the latter expectation is strictly positive, it follows that $\E[\alpha_i(Y)] \leq
  \E[\alpha_j(Y)]$ if and only if $\E[Y_i] \leq \E[Y_j]$.
\end{proof}
As noted in section~\ref{sec:theoretical-auc}, mutual independence is a
somewhat unrealistic condition for scoring functions. Nevertheless, it can be
useful when combined with the following result.

\begin{theorem}
  \label{thm:latent}
  Let $\Family$ consist of distributions $P$ such that there is a latent variable
  $Z$ whereby
  \begin{enumerate}
  \item[(i)] for almost all $Z$, $\E_P[Y \mid Z]$ induces the same preordering
    as $\E_P[\alpha(Y)\mid Z]$, and
  \item[(ii)] this preordering is the same for almost all $Z$,
  \end{enumerate}
  then AUC is a strictly proper scoring function for $R^*$.
\end{theorem}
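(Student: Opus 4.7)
The plan is to reduce the claim to the main rank-sum theorem above by checking that, for AUC, the preorder induced by $\E_P[\alpha(Y)]$ lies in $R^*(P)$. Since AUC has the rank-sum form with $\sigma_i(y)=\tfrac12\alpha_i(y)$ plus an additive constant, and the positive scalar does not affect the induced preorder, it suffices to show that $\precsim_{\E_P[\alpha(Y)]}$ is contained in $\precsim_{\E_P[Y]}=R(P)$ for every $P\in\Family$.

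First I would condition on the latent variable via the tower property,
\begin{equation*}
\E_P[Y_i]=\E_P\bigl[\E_P[Y_i\mid Z]\bigr]
\quad\text{and}\quad
\E_P[\alpha_i(Y)]=\E_P\bigl[\E_P[\alpha_i(Y)\mid Z]\bigr].
\end{equation*}
Hypothesis (i) then tells us that, for almost all $z$, the vectors $\E_P[Y\mid Z=z]$ and $\E_P[\alpha(Y)\mid Z=z]$ induce a common preorder on $\{1,\ldots,n\}$, and hypothesis (ii) says this preorder does not depend on $z$; denote it by $\precsim^*$.

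Next I would verify that $\precsim^*$ is preserved when averaging over $Z$, for both vectors simultaneously. If $i\prec^* j$, then $\E_P[Y_j-Y_i\mid Z]>0$ almost surely, so its expectation is strictly positive; the same bound holds for $\alpha_j-\alpha_i$. If $i\sim^* j$, then $\E_P[Y_i\mid Z]=\E_P[Y_j\mid Z]$ almost surely, and likewise for $\alpha$, so the marginal expectations agree. Hence $\E_P[Y]$ and $\E_P[\alpha(Y)]$ both induce $\precsim^*$; in particular $\precsim_{\E_P[\alpha(Y)]}=R(P)\in R^*(P)$, and the main rank-sum theorem delivers the conclusion.

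The main obstacle is essentially bookkeeping around strict versus non-strict inequalities: to deduce $\E[Y_i]<\E[Y_j]$ from $i\prec^* j$ one needs a strict conditional inequality that holds almost surely, which is precisely what ``induces the same preordering'' supplies. Both hypotheses are genuinely needed; without (ii), distinct conditional preorders could be averaged into a pair of unconditional vectors whose induced preorders disagree, reproducing the mechanism behind Example~\ref{exm:auc-counter}.
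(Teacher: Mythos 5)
Your proposal is correct and follows essentially the same route as the paper: condition on the latent variable $Z$, use (i) to get a common conditional preorder and (ii) to make it independent of $Z$, then average over $Z$ to conclude that $\E_P[Y]$ and $\E_P[\alpha(Y)]$ induce the same preorder, so the rank-sum propriety theorem applies. Your version merely spells out the strict/equality bookkeeping that the paper's two-line argument leaves implicit.
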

\begin{proof}
  Condition (i) implies that 
  \begin{equation*}
    \E_P[Y_i - Y_j \mid Z] \geq 0
    \quad\Leftrightarrow\quad
    \E_P[\alpha_i(Y) -\alpha_j(Y) \mid Z] \geq 0,
  \end{equation*}
  and by condition (ii) then,
  \begin{equation*}
    \E_P[\E[Y_i - Y_j \mid Z]] = \E_P[Y_i - Y_j] \geq 0
    \quad\Leftrightarrow\quad
    \E_P[\alpha_i(Y) - \alpha_j(Y)] \geq 0.
  \end{equation*}
\end{proof}
This provides a means for showing AUC is proper in more general contexts, by
combining it with one of the previous two theorems to satisfy condition
(i). For example, if $\theta$ is a parameter in a Bayesian model, conditional
on which the outcomes are independent (\eg a logistic regression model), then
AUC is proper for the predictive distributions if (ii) holds.

However these conditions can fail if there is significant uncertainty in the
ordering of the outcomes, which may arise in problems such as out-of-sample
prediction.
\begin{example}
  \label{exm:auc-counter-model}
  Suppose that there are two candidate models, $A$ and $B$,
  each weighted with probability 1/2, and the forecaster is to rank 100
  outcomes, of which 10 have a particular feature $U$ present. Suppose that
  the forecast probabilities are
  \begin{align*}
    \E[Y_i \mid U_i, A] &= 0.4 &     \E[Y_i \mid \neg U_i, A] &= 0.5 \\
    \E[Y_i \mid U_i, B] &= 0.95 &     \E[Y_i \mid \neg U_i, B] &= 0.9,
  \end{align*}
  and that outcomes are independent within each model. Then the resulting
  marginal probabilities are
  \begin{align*}
    \E[Y_i \mid U_i] &= 0.675 &     \E[Y_i \mid \neg U_i] &= 0.7 
  \end{align*}
  However using the induced ranking will result in an expected AUC of 0.496,
  whereas the opposite ranking will give an expected AUC of 0.504 (see
  supplementary material).
\end{example}

\section{Scoring functions for mappings}
\label{sec:map-scoring}

In many forecasting settings, each variable $Y_i$ has a corresponding
\emph{covariate} or \emph{feature} $X_i$ taking values in some measurable
space $\X_{\Cdot}$, which can be used to inform the prediction. In the case
where the forecaster is able to observe the covariates directly, we can assume
any relevant information is taken into account, and thus no additional
consideration is required.

However we can also consider the setting in which the forecaster does not
observe the covariates, but is instead required to provide some sort of
mapping from the covariate space $\X = (\X_{\Cdot})^n$ to the original prediction space
$\Predspace$ for $Y$ (we use the term \emph{mapping} so as to distinguish from
scoring functions). In other words, the forecaster is required to make a
prediction in the \emph{mapping prediction space}
\begin{equation*}
  \vec\Predspace = \{f : \X \to \Predspace\}.
\end{equation*}
Furthermore, any scoring function $s:\Y \times \Predspace \to \R$ has a
corresponding \emph{mapping form} $\vec s: (\X \times \Y) \times
\vec\Predspace \to \R$ which is simply $s$ evaluated using the mapping applied
to the observed covariates,
\begin{equation*}
  \vec s\bigl( (x,y) , f \bigr) = s\bigl( Y, f(X) \bigr).
\end{equation*}
Similarly, given any statistical functional $T:\Family \to \Predspace$, we can
define the corresponding \emph{mapping functional} $\vec T : \Family_{XY} \to
\vec\Predspace$ as the mapping of the conditional expectation
\begin{equation*}
  \vec T (P_{XY}) (x) = T(P_{Y \mid X = x}),
\end{equation*}
where $P_{Y \mid X=x}$ denotes the conditional distribution of $Y$ given $X
=x$ under $P$. That is, the optimal mapping should map each $x \in \X$
to the optimal prediction under the conditional distribution $P_{Y \mid X =
  x}$.

\begin{theorem}
\label{thm:map-opt}
Let $s$ be a $T$-proper scoring function for a family $\Family$, then $\vec s$
is a $\vec T$-proper scoring function for $\Family_{XY}$ if for each $P_{XY}
\in \Family_{XY}$, there exists a family of conditional distributions $\{P_{Y \mid
  X =x}\}_x$ which is a subset of $\Family$.
\end{theorem}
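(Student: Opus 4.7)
The plan is to reduce the mapping-form claim to a pointwise application of $T$-propriety via the tower property of conditional expectation. Fix any $P_{XY} \in \Family_{XY}$ and any candidate mapping $f \in \vec\Predspace$, and write
\begin{equation*}
  \E_{P_{XY}}\bigl[\vec s\bigl((X,Y), f\bigr)\bigr]
  = \E_{P_{XY}}\bigl[s\bigl(Y, f(X)\bigr)\bigr]
  = \E_{P_X}\!\left[ \E_{P_{Y \mid X}}\!\left[ s\bigl(Y, f(X)\bigr) \mid X \right] \right].
\end{equation*}
For almost every $x$, the inner conditional expectation equals $\E_{P_{Y \mid X = x}}[s(Y, f(x))]$, where $f(x) \in \Predspace$ is now just an element of the prediction space, not a mapping.

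Next I invoke the hypothesis that $\{P_{Y \mid X = x}\}_x \subseteq \Family$, which lets me apply the $T$-propriety of $s$ on the fibre distribution $P_{Y \mid X = x}$ with competitor prediction $u = f(x)$. This yields, for almost every $x$,
\begin{equation*}
  \E_{P_{Y \mid X = x}}\bigl[s(Y, f(x))\bigr]
  \;\leq\;
  \E_{P_{Y \mid X = x}}\bigl[s(Y, T(P_{Y \mid X = x}))\bigr]
  = \E_{P_{Y \mid X = x}}\bigl[s(Y, \vec T(P_{XY})(x))\bigr],
\end{equation*}
using the definition of $\vec T$. Taking the outer expectation over $X$ preserves the inequality and recombines the iterated expectation into a single expectation under $P_{XY}$, giving $\E_{P_{XY}}[\vec s((X,Y), f)] \leq \E_{P_{XY}}[\vec s((X,Y), \vec T(P_{XY}))]$, which is exactly $\vec T$-propriety. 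The set-valued case of \eqref{eq:exp-scorefn-set} is handled identically, applying the pointwise inequality for each $t(x) \in T(P_{Y \mid X = x})$.

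The only real subtlety, and the step I expect to be the main obstacle, is the measure-theoretic bookkeeping: one needs the regular conditional distribution $P_{Y \mid X}$ to exist (standard on Polish spaces), the map $x \mapsto T(P_{Y \mid X = x})$ to be measurable so that $\vec T(P_{XY})$ is an admissible element of $\vec\Predspace$, and the integrands to be sufficiently integrable that Fubini/tower applies. Given that the paper works in a fairly informal measure-theoretic setting, I would state the theorem under an implicit regularity assumption (as is standard) rather than belabour these technicalities; the core content is simply that $T$-propriety is inherited fibrewise and then integrated.
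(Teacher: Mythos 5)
Your proposal is correct and follows essentially the same route as the paper: condition on $X$ via the tower property, apply $T$-propriety of $s$ fibrewise to the conditional distribution $P_{Y \mid X = x}$ (which the hypothesis places in $\Family$), and integrate over $X$. The paper's proof is just a terser version of this argument, leaving the measurability and regular-conditional-distribution issues you flag implicit.
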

\begin{proof}
  The expected mapping score is
  \begin{equation*}
    \E \bigl[ \vec s\bigl( (x,y), f \bigr) \bigr]
    = \E \bigl[ \E\bigl[ s \bigl( Y, f(X) \bigr) \mid X \bigr] \bigr].
  \end{equation*}
  The inner expectation can be maximised for each value of $X \in \X$ by
  choosing $f(x) = \argmax_z \E[ s(Y,z) \mid X]$, which, as $s$ is
  $T$-proper, will be (an element of) $T(P_{Y \mid X = x})$.
\end{proof}

However we typically don't want to consider all possible mappings
$f:\X \to \Predspace$. Instead, we typically are only interested in mappings
that can be applied coordinate-wise,
\begin{equation*}
  f(x) = \bigl( f_{\Cdot}(x_1), \ldots, f_{\Cdot}(x_n) \bigr),
  \quad \text{where} \ 
  f_{\Cdot} : \X_{\Cdot} \to \R.
\end{equation*}
In other words, we constrain the mapping such that the forecast for each $Y_i$
depends only on its corresponding covariate $X_i$, and require that this
mapping be the same for all $i$.  Of course, we also need to constrain the
family of distributions to ensure that the marginal mapping is
coordinate-wise.
\begin{theorem}
  \label{thm:map-coord}
  Let $\vec\Family$ be the set of distributions for $(X,Y)$ such that
  \begin{enumerate}
  \item[(i)] $Y_i$ are conditionally independent of $X$ given $X_i$, and
  \item[(ii)] the distribution of $Y_i \mid X_i$ is the same for all $i$.
  \end{enumerate}
  Then for any $M$-proper scoring function $s$ for a family $\Family$,
  $\vec s$ is a $\vec M$-proper scoring function for the set of
  coordinate-wise mappings if the conditional distributions $P_{Y \mid X =x}$
  are in $\Family$. 
\end{theorem}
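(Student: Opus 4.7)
The plan is to invoke Theorem \ref{thm:map-opt} to identify the unrestricted optimal mapping $\vec M(P_{XY})$, and then show that under conditions (i) and (ii) this optimum is already of coordinate-wise form, so restricting the prediction space to coordinate-wise mappings does not exclude it.

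First, since $s$ is $M$-proper on $\Family$ and the conditional distributions $P_{Y \mid X = x}$ are in $\Family$ by hypothesis, Theorem \ref{thm:map-opt} applies, giving that $\vec s$ is $\vec M$-proper over all of $\vec\Predspace$. Writing out the optimum explicitly,
\begin{equation*}
  \vec M(P_{XY})(x) = M(P_{Y \mid X = x}) = \bigl( \E_P[Y_1 \mid X = x], \ldots, \E_P[Y_n \mid X = x] \bigr).
\end{equation*}

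Next I would use the two structural assumptions to rewrite this optimum. Condition (i) says $Y_i$ is conditionally independent of $X$ given $X_i$, so $\E_P[Y_i \mid X = x] = \E_P[Y_i \mid X_i = x_i]$ and each coordinate depends only on its own covariate. Condition (ii) says the conditional law $P_{Y_i \mid X_i}$ is the same for all $i$, so the function $f_{\Cdot}: \X_{\Cdot} \to [0,1]$ defined by $f_{\Cdot}(x_i) = \E_P[Y_i \mid X_i = x_i]$ does not depend on the index $i$. Combining these,
\begin{equation*}
  \vec M(P_{XY})(x) = \bigl( f_{\Cdot}(x_1), \ldots, f_{\Cdot}(x_n) \bigr),
\end{equation*}
which is exactly a coordinate-wise mapping.

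Hence the global maximiser of $\E[\vec s((X,Y), f)]$ over $f \in \vec\Predspace$ already lies inside the class of coordinate-wise mappings, which immediately implies that $\vec s$ is $\vec M$-proper when the prediction space is restricted to that class. There is no real obstacle here beyond bookkeeping: the only point requiring care is checking that (i) and (ii) together make $\vec M(P_{XY})$ representable as a single function $f_{\Cdot}$ applied coordinate-wise, since otherwise the unrestricted optimum could still map each coordinate through a different function and fail to lie in the restricted class.
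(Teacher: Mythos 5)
Your proposal is correct and follows essentially the same route as the paper's proof: use (i) to reduce $\E[Y_i \mid X = x]$ to $\E[Y_i \mid X_i = x_i]$, use (ii) to see that the resulting function is the same for every coordinate, conclude that the optimal mapping $\vec M(P_{XY})$ is coordinate-wise, and then invoke Theorem~\ref{thm:map-opt}. Your added remark that the unrestricted optimum lying in the restricted class is what makes the restriction harmless is exactly the (implicit) logic of the paper's argument.
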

\begin{proof}
  By (i) we have that $\E[Y_i \mid X=x] = \E[Y_i \mid X_i=x_i]$, and by (ii)
  it follows that this quantity is the same for all $i$. Therefore the mapping
  $f(x) =\vec M(P_{Y\mid X=x})$ is coordinate-wise, which by
  Theorem~\ref{thm:map-opt}, implies that $\vec s$ is $\vec M$-proper.
\end{proof}
% justify assumptions

Consequently $\vec u$, the mapping form of $u$ is $\vec M$-proper for any
$\vec\Family$ satisfying (i) and (ii). For AUC to be $\vec M$-proper,
additional conditions are required, such as mutual independence of elements of
$Y$ conditional on $X$.

\section{Discussion}
\label{sec:discussion}

Although we have demonstrated that AUC is not generally a proper scoring
function, Examples~\ref{exm:auc-counter} and \ref{exm:auc-counter-model} both
exhibit quite extreme dependence between outcomes. Therefore, it might be
possible to establish a more relaxed criteria for establishing propriety of
AUC, for example, bounds on correlation or other measures of dependence.

We have also only considered the batch prediction setting where the forecaster
is required to provide the preordering for all $Y$ before any outcomes have
been observed. One alternative is a sequential framework, where at each point in time the
forecaster is required to provide a forecast for $Y_{t+1}$, having already
observed $Y_1,\ldots,Y_t$. In the ranking case, this requires the forecaster
to provide a total preorder $\precsim_{t+1}$ on $I_{t+1}$ that is compatible
with the one $\precsim_{t}$ provided on $I_t$. Unfortunately, rank-sum scoring
functions are essentially useless in this setting.

\begin{example}
  \label{exm:separation}
  Let $s$ be any rank-sum scoring rule of the form in \eqref{eq:ranksum},
  where $\sigma_i(y) = \sigma_j(y)$ if $y_i=y_j$, and $\sigma_i(y) \geq
  \sigma_j(y)$ if $y_i > y_j$ (both $u$ and the AUC satisfy this
  property). Then in the sequential setting, it is possible to maintain an
  optimal score by choosing $\precsim_{t+1}$ such that
  \begin{equation*}
    i \prec_{t+1} t+1 \prec_{t+1} j
    \quad \text{for all $i,j \leq t$ : $Y_i = 0$ and $Y_j=1$}.
  \end{equation*}
  By a straightforward application of induction, it is easy to see that such a
  sequence exists, and that it will maintain this ``perfect separation'', in
  that all $i$ where $Y_i =1$ will always be ranked above all $j$ where
  $Y_j=0$. Therefore, by Lemma~\ref{lem:sumvec}, this will result in the
  largest possible score (\ie an AUC of 1): note that unlike the
  previous sections, we refer to \emph{actual} score, not just the
  expected score.
\end{example}

In other words, it is possible to construct an optimal procedure with
absolutely no information whatsoever about the process of $Y_t$. This problem
will persist in the analogous mapping problem, where the forecaster is free to
choose the mapping $f_t : \X_{\Cdot} \to \R$ at each iteration.

\section*{Acknowledgements}
The author is grateful to the input of Matthew Parry, and the support from EPSRC fellowship EP/K005723/1.

%\printbibliography
\bibliographystyle{chicago}
\bibliography{refs}

\begin{thebibliography}{}

\bibitem[\protect\citeauthoryear{Agarwal, Graepel, Herbrich, Har-Peled, and
  Roth}{Agarwal et~al.}{2005}]{agarwal2005}
Agarwal, S., T.~Graepel, R.~Herbrich, S.~Har-Peled, and D.~Roth (2005).
\newblock Generalization bounds for the area under the {ROC} curve.
\newblock {\em Journal of Machine Learning Research\/}~{\em 6}, 393--425.

\bibitem[\protect\citeauthoryear{Cl{\'e}men{\c{c}}on, Lugosi, and
  Vayatis}{Cl{\'e}men{\c{c}}on et~al.}{2008}]{clemencon2008}
Cl{\'e}men{\c{c}}on, S., G.~Lugosi, and N.~Vayatis (2008).
\newblock Ranking and empirical minimization of {$U$}-statistics.
\newblock {\em Annals of Statistics\/}~{\em 36\/}(2), 844--874.

\bibitem[\protect\citeauthoryear{Dawid, Lauritzen, and Parry}{Dawid
  et~al.}{2012}]{dawid2012}
Dawid, A.~P., S.~Lauritzen, and M.~Parry (2012).
\newblock Proper local scoring rules on discrete sample spaces.
\newblock {\em Annals of Statistics\/}~{\em 40\/}(1), 593--608.

\bibitem[\protect\citeauthoryear{Flach, Hernandez-Orallo, and Ferri}{Flach
  et~al.}{2011}]{flach2011}
Flach, P., J.~Hernandez-Orallo, and C.~Ferri (2011).
\newblock A coherent interpretation of {AUC} as a measure of aggregated
  classification performance.
\newblock In L.~Getoor and T.~Scheffer (Eds.), {\em Proceedings of the 28th
  International Conference on Machine Learning}, New York, NY, USA, pp.\
  657--664. ACM.

\bibitem[\protect\citeauthoryear{Gneiting}{Gneiting}{2011}]{gneiting2011}
Gneiting, T. (2011).
\newblock Making and evaluating point forecasts.
\newblock {\em Journal of the American Statistical Association\/}~{\em
  106\/}(494), 746--762.

\bibitem[\protect\citeauthoryear{Hand}{Hand}{2009}]{hand2009}
Hand, D.~J. (2009).
\newblock Measuring classifier performance: a coherent alternative to the area
  under the {ROC} curve.
\newblock {\em Machine Learning\/}~{\em 77\/}(1), 103--123.

\bibitem[\protect\citeauthoryear{Hanley and McNeil}{Hanley and
  McNeil}{1982}]{hanley1982}
Hanley, J.~A. and B.~J. McNeil (1982).
\newblock The meaning and use of the area under a receiver operating
  characteristic ({ROC}) curve.
\newblock {\em Radiology\/}~{\em 143\/}(1), 29--36.

\end{thebibliography}

\end{document}